\newcommand{\E}{\mathbb E}
\newcommand{\Prob}[1]{\mathbb{P} \left\{#1\right\}}
\newcommand{\R}{\mathbb{R}}
\newcommand{\N}{\mathbb{N}}
\newcommand{\one}{\boldsymbol{1}}
\newcommand{\eps}{\varepsilon}
\newcommand{\Sphere}[1][d-1]{\mathbb{S}^{#1}}
\newcommand{\sH}{\mathcal{H}}
\newcommand{\sP}{\mathcal{P}}
\newcommand{\sF}{\mathcal{F}}
\newcommand{\sR}{\mathcal{R}}
\newcommand{\sU}{\mathcal{U}}
\newcommand{\dto}{\xrightarrow{\text{d}}}
\DeclareMathOperator{\exo}{exo}
\DeclareMathOperator{\supp}{supp}
\DeclareMathOperator{\esssup}{ess\,sup}
\DeclareMathOperator{\interior}{Int}
\DeclareMathOperator{\cl}{cl}
\theoremstyle{plain}
\newtheorem{theorem}{Theorem}[section]
\newtheorem{proposition}[theorem]{Proposition}
\newtheorem{lemma}[theorem]{Lemma}
\newtheorem{corollary}[theorem]{Corollary}
\theoremstyle{definition}
\newtheorem{definition}[theorem]{Definition}
\theoremstyle{remark}
\newtheorem{remark}[theorem]{Remark}
\newtheorem{example}[theorem]{Example}
\begin{document}

\title{Intersections of randomly translated sets}
\author{\fnm{Tommaso} \sur{Visonà}}\email{tommaso.visona@unibe.ch}
\affil{\orgdiv{Institute of Mathematical Statistics and Actuarial Science}, \orgname{Universität Bern}, \orgaddress{\street{Alpeneggstrasse 22}, \city{Bern}, \postcode{3012}, \country{Switzerland}}}

\date{\today}

\abstract{  Let $\Xi_n=\{\xi_1,\dots,\xi_n\}$ be a sample of $n$ independent
  points distributed in a regular closed element $K$ of the extended
  convex ring in $\mathbb{R}^d$ according to a probability measure
  $\mu$ on $K$, admitting a density function. We consider random sets
  generated from the intersection of the translations of $K$ by
  elements of $\Xi_n$, as
  \begin{displaymath}
    X_n=\bigcap_{i=1}^n (K-\xi_i).
  \end{displaymath}
  This work aims to show that the scaled closure of the complement of $X_n$ as $n\to\infty$
  converges in distribution to the closure of the complement zero cell of a Poisson hyperplane
  tessellation whose distribution is determined by the curvature
  measure of $K$ and the behaviour of the density of $\mu$ near the
  boundary of $K$.}

\keywords{Minkowski difference, Regular closed random sets, Zero cell
  of a Poisson tesselation, Intersection of random sets.}
 
\maketitle
\section{Introduction}
\label{sec:introduction}

A random closed set in Euclidean space is a random element in the
family of closed sets in $\R^d$ equipped with the Fell topology.  Limit theorems for random sets are mostly derived for
their Minkowski sums and unions, see \cite{TRS}.

Only recently, \cite{MR4400102} proved a limit theorem for random sets
obtained as the intersection of unit Euclidean balls $x+B_1$ whose
centres $x$ form a Poisson point process of growing intensity
$\lambda$ on the same unit ball. It was shown that these random sets
scaled by $\lambda$ converge in distribution as $\lambda\to\infty$ to
the zero cell of a Poisson hyperplane tessellation. It was also shown
that the volumes of these intersection sets converge in distribution.

Many results from \cite{MR4400102} follow from the studies of
a similar model appearing by taking the ball hull of a sample
$\Xi_n=\{\xi_1,\dots,\xi_n\}$ which consists of i.i.d.\ points
uniformly distributed in the unit Euclidean ball.  The ball hull
\begin{displaymath}
  Q_n=\bigcap_{x\in\R^d, \Xi_n\subset x+B_1} (x+B_1)
\end{displaymath}
is the intersection of all (translated) unit balls which contain the
sample. It is easy to see that the set $X_n$ of all $x\in\R^d$ such
that $\Xi_n\subset x+B_1$ satisfies
\begin{displaymath}
  X_n=\{x:\Xi_n\subset x+B_1\}=\bigcap_{i=1}^n (\xi_i+B_1).
\end{displaymath}
This line of research was initiated in \cite{MR3290422}, where the combinatorial structure of the ball polytope $Q_n$ was explored. In
particular, it was shown that the expectation of the number of faces
of $Q_n$ in dimension 2 converges to a nontrivial limit without taking
any normalisation. This was generalized in \cite{MR4130336}, by
replacing the ball with a convex body in $\R^2$, whose boundary needs to satisfy some requirements.

The ball hull model has been extended in \cite{MR4363583}, where the unit ball was replaced by a general convex body in the space of arbitrary
dimension and it was shown that the convergence of distribution of
$nX_n$ entails the convergence of the combinatorial features of $Q_n$, namely, its $f$-vector, in particular, the numbers of vertices and
facets. It is shown that the latter convergence holds in distribution together with all moments.

The standard closed convex hull of a set is defined as the intersection of all images,
under the action of a group of rigid motions, of a half-space containing the given set. In \cite{ARXIV} a generalisation of this concept is proposed, with a focus on the analysis of the newly defined convex hulls of random samples taken from a fixed convex body.

Following the setting of \cite{MR4363583}, we assume that
$\Xi_n=\{\xi_1,\dots,\xi_n\}$ is a set of $n$ i.i.d.\ points
distributed in a set $K\subset\R^d$ according to a probability measure
$\mu$ and define
\begin{equation}
  \label{eq:1}
  X_n:=\bigcap_{i=1}^n (K-\xi_i).
\end{equation}
In contrast to \cite{MR4363583}, it is not assumed that points are
uniformly distributed in $K$ and the convexity assumption on $K$ is
also dropped. Instead, it is assumed that $K$ is a regular closed
set from the extended convex ring, which is the family of locally
finite unions of convex bodies, see \cite{MR2304055}.

The main result states that, after an appropriate multiplicative
scaling, the complement of the closure of the random closed set $X_n$ converges in distribution to the closure of the complement of the
zero cell of a tessellation in $\R^d$, whose distribution is
determined by the curvature measure of $K$ and the behaviour of $\mu$
near the boundary of $K$. Some limit results on the expectation of the volume of properly scaled $X_n$ are also presented. 

The proof is based on two technical results. First, it relies on
the analysis of the asymptotic properties of a family of measures over Minkowski differences between a set $K$ and a scaled version of another set $L$, which is
\begin{displaymath}
  K\ominus \eps L:=\{x:x+\eps L\subset K\}.
\end{displaymath}
In \cite{MR2304055}, the authors obtain such results for the volume
$V_d$. For each gentle set $K$, whose definition can be found in Section \ref{sec:asympotic}, and each compact set $L$,
\begin{displaymath}
  \lim_{\eps\downarrow0}\frac{1}{\eps}V_d(K\setminus K\ominus \eps L)
  =\int_{\Sphere}h(L,u)^+S_{d-1}(K,d u), 
\end{displaymath}
where $h(L,u)$ is the support function of $L$ and $h(L,u)^+$ is its
positive part, and $S_{d-1}$ is the surface area measure of $K$.

In Section~\ref{sec:asympotic}, we extend this result by replacing the volume with a general finite measure $\mu$, which is absolutely continuous in a neighbourhood of the boundary of $K$ and has density function which behaves like a power of order $\alpha$ near the
boundary of $K$. The limit is given by a similar integral which
depends on the support function $h$ and the asymptotic behaviour of
the density function of $\mu$ near the boundary $\partial K$. The function of this power then appears in the scaling factor of $X_n$ as
$n^\gamma$ with $\gamma=(1+\alpha)^{-1}$.  It is natural that only the
behaviour of $\mu$ near $\partial K$ matters for the asymptotic of
$n^{\gamma} X_n$ since $K-\xi_i$ for $\xi_i$ within any positive
distance of $\partial K$ does not contribute to the intersection in
\eqref{eq:1}.

The second main technical tool relies on the fact that convergence of the inclusion functionals of regular closed random sets implies the
convergence in distribution of the closure of their complements.

\section{Asymptotic properties of integrals over Minkowski differences}
\label{sec:asympotic}

By $\sH^{d-1}$ we denote the Hausdorff measure of dimension $(d-1)$,
and by $\sH^d$ or $V_d$ the Lebesgue measure in $\R^d$. We write $dx$
in integrals with respect to the Lebesgue measure. By $B_r(a)$ we
denote the closed ball in $\R^d$ of radius $r$ and centre
$a\in\R^d$. Given a set $A$ in $\R^d$, denote by $\interior(A)$ 
the interior of $A$, by $\cl(A)$ its closure, and by $A^c$ its
complement.

Let $K\subseteq\R^d$ be a closed set. The metric projection
$\xi_K:\R^d\setminus\exo(K)\to K$ is defined by letting $\xi_K(a)$ be
the unique nearest point to $a$ from $K$, where the exoskeleton
$\exo(K)$ is the set of points which do not admit a unique nearest
point in $K$. The set $\exo(K)$ is measurable and $V_d(\exo(K))=0$,
see Section~2 in \cite{MR2304055}. The reduced normal bundle of $K$ is
\begin{displaymath}
  N(K):=\left\{\left(\xi_K(z),\frac{z-\xi_K(z)}{\|z-\xi_K(z)\|}\right)
    :z\not\in K\cup \exo(K)\right\}.
\end{displaymath}
The set
\begin{displaymath}
  \hat{N}(K):=N(\partial K)\setminus N(K)
\end{displaymath} 
is called the inner reduced normal bundle.  The reach function of $K$
is defined as
\begin{displaymath}
  \delta(K,a,u):=\inf\{t\geq 0:a+tu\in\exo(K)\}, \quad (a,u)\in N(K),
\end{displaymath}
assuming that $\delta(K,a,u)=+\infty$ if the set
$\{t\geq0:a+tu\in\exo(K)\}$ is empty.

A set $K\subset\R^d$ is called regular closed if it coincides with the
closure of its interior. Let $\mathcal{R}$ be the family of regular
closed sets.  A convex set $K$ is regular closed if and only if its
interior is not empty.

A closed set $K$ is said to be gentle if
\begin{itemize}
\item[(G1)] $\sH^{d-1}(\{a\in B:(a,u)\in N(\partial K), u\in \Sphere\})<\infty$
  for all bounded Borel sets $B\subset\R^d$,
\item[(G2)] for $\sH^{d-1}$-almost all $a\in\partial K$, there are
  non-degenerate balls $B_i$ and $B_o$ containing $a$ such that
  $B_i\subset K$ and $\interior B_o\subset K^c$, 
\end{itemize}
see Section~2 in \cite{MR2304055}. A gentle set $K$ is not
necessarily regular closed, for example, a singleton is gentle. Each
convex body in $\R^d$ is gentle. 

Let $K$ be a gentle set. For $\sH^{d-1}$-almost all $a\in\partial K$,
there exists a unique $u\in\Sphere$ such that $(a,u)\in N(K)$ and
$(a,-u)\in \hat{N}(K)$. This follows from (G2) since the tangent balls $B_i$ and $B_o$ at $a$ are unique. Let $C_{d-1}(K,\cdot)$ be the image measure of
$\sH^{d-1}$ on $\partial K$ under the map $a\mapsto (a,u)\in N(K)$,
which is defined $\sH^{d-1}$-almost everywhere on $\partial K$ and is
measurable, see Lemma~6.3 from \cite{MR2031455}. The measure
$C_{d-1}(K,\cdot)$ is called the curvature measure of $K$.

We will use the abbreviation for almost all $(a,u)\in N(K)$ instead
of for $C_{d-1}(K,\cdot)$-almost all $(a,u)\in N(K)$. The same
agreement is used for $\hat{N}(K)$ equipped with the measure
$C_{d-1}^*(K,\cdot)$, where $C^*_{d-1}(K,\cdot)$ is the image measure
of $C_{d-1}(K,\cdot)$ under the reflection
$(a,u)\in N(K)\mapsto(a,-u)\in\hat{N}(K)$, this
reflection is well-defined almost everywhere. 

If $K$ is gentle, for almost all $(a,u)\in N(K)$, it is possible to
express explicitly in terms of the reach function the radii of the
inner and outer balls associated with an $a\in\partial K$ as mentioned
in (G2). For almost all $(a,u)\in N(K)$, we denote
$\delta_\pm:=\delta(\partial K,a,\pm u)$, so that
$B_o:=B_{\delta_+}(a+\delta_+u)$ and
$B_i:=B_{\delta_-}(a-\delta_-u)$. Moreover, $\delta_+>0$ and
$\delta_->0$.

For $L\subseteq\R^d$, its support function
is defined as
\begin{displaymath}
  h(L,u):=\sup\{\langle x, u\rangle: x\in L\}, \quad u\in\R^d.
\end{displaymath}
If $h(L,u)<+\infty$, the supporting hyperplane of $L$ with normal
$u\neq 0$ is given by
\begin{displaymath}
  H(L,u):=\{x\in\R^d: \langle u, x\rangle=h(L,u) \}.
\end{displaymath}
We denote with $h(L,u)^+$ the positive part of $h(L,u)$.

For sets $K$ and $L$ in $\R^d$, 
\begin{displaymath}
  K\ominus L:=\{x\in\R^d: L+x\subseteq K\}
\end{displaymath}
is the Minkowski difference of $K$ and $L$, and the set
\begin{displaymath}
  \check{L}:=\{x\in \R^d:-x\in L\}
\end{displaymath}
is the reflection of $L$ with respect to the origin.

We recall that for almost all $a\in\partial K$, if $(a,u)\in N(K)$, we have that $\delta(K,a,u)=\delta(\partial K,a,u)$ since $N(K)\subset N(\partial K)=N(K)\cup \hat{N}(K)$. The same applies to $\hat{N}(K)$.

\begin{lemma}\label{lemma1}
  Let $K$ be a gentle set and let $L$ be a compact set in $\R^d$. Let
  $f$ be a nonnegative integrable function. Then, for almost all
  $(a,u)\in \hat{N}(K)$, there exist functions $t_+$ and $t_-$, which
  satisfy
  \begin{displaymath}
    \lim_{\eps\downarrow 0}\frac{t_\pm(\eps)}{\eps}=h(L,-u)^+,
  \end{displaymath}
  and such that
  \begin{align*}
    \int_0^{t_+(\eps)}f(a+tu)\,dt
    &\leq\int_0^{\delta(\partial K,a,u)}f(a+tu)\one_{K\setminus
      K\ominus\eps L}(a+tu)\,dt\\
    &\leq\int_0^{t_-(\eps)}f(a+tu)\,dt
  \end{align*}
  for all sufficiently small $\eps$.
\end{lemma}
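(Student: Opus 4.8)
The plan is to reduce the statement to a one-dimensional analysis along the inner normal ray $\{a+tu:t\ge 0\}$, exploiting the two tangent balls supplied by gentleness. For almost every $(a,u)\in\hat N(K)$ the direction $u$ is the inner normal at $a$, and property (G2) provides an inner ball $B_i=B_{\delta}(a+\delta u)$ with $\delta=\delta(K,a,u)>0$ and an outer ball $B_o=B_{\rho}(a-\rho u)$ with $\rho=\delta(K,a,-u)>0$, both tangent to $\partial K$ at $a$, with $B_i\subseteq K$ and $\interior B_o\subseteq K^c$. Since $a+tu\in B_i\subseteq K$ for every $t\in[0,\delta]$, on this range the integrand simplifies to $\one_{K\setminus K\ominus\eps L}(a+tu)=\one\{a+tu+\eps L\not\subseteq K\}$. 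Writing $S(\eps)=\{t\in[0,\delta]:a+tu+\eps L\not\subseteq K\}$, the middle quantity equals $\int_{S(\eps)}f(a+tu)\,dt$, and because $f\ge 0$ it suffices to sandwich $S(\eps)$ between two intervals $[0,t_+(\eps)]\subseteq S(\eps)\subseteq[0,t_-(\eps)]$ with $t_\pm(\eps)/\eps\to h(L,-u)^+$.

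For the upper bound I would use the inner ball: if $a+tu+\eps L\subseteq B_i$ then $a+tu\in K\ominus\eps L$, so such $t$ lie outside $S(\eps)$. This inclusion is equivalent to $\psi_\eps(t):=\sup_{\ell\in L}\|(t-\delta)u+\eps\ell\|^2-\delta^2\le 0$. The map $t\mapsto\psi_\eps(t)$ is convex (a supremum of convex quadratics), satisfies $\psi_\eps(\delta)=\eps^2\max_{\ell\in L}\|\ell\|^2-\delta^2<0$ for small $\eps$, and a direct expansion gives $\psi_\eps(t)=t^2-2t\delta+2\eps(\delta-t)h(L,-u)+O(\eps^2)$ near $t=0$. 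Defining $t_-(\eps)$ as the smallest zero of $\psi_\eps$ in $[0,\delta]$ (or $0$ when $\psi_\eps(0)\le 0$), convexity forces $\psi_\eps\le 0$ on the whole segment $[t_-(\eps),\delta]$, hence $S(\eps)\subseteq[0,t_-(\eps)]$; the leading balance $-2t\delta+2\eps\delta\, h(L,-u)=0$ yields $t_-(\eps)/\eps\to h(L,-u)^+$.

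For the lower bound I would use the outer ball together with a maximiser $\ell^*\in L$ of $\langle\,\cdot\,,-u\rangle$, so that $\langle u,\ell^*\rangle=-h(L,-u)$. If $a+tu+\eps\ell^*\in\interior B_o\subseteq K^c$ then $a+tu+\eps L\not\subseteq K$ and $t\in S(\eps)$. This membership reads $\phi_\eps(t):=\|(t+\rho)u+\eps\ell^*\|^2-\rho^2=t^2+2t\rho-2\eps(t+\rho)h(L,-u)+\eps^2\|\ell^*\|^2<0$, a convex function that is increasing in $t\ge 0$ for small $\eps$. When $h(L,-u)>0$ it is negative on a maximal interval $[0,t_+(\eps))$ and I set $t_+(\eps)$ equal to its zero, while when $h(L,-u)\le 0$ I simply set $t_+(\eps)=0$. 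The same leading balance gives $t_+(\eps)/\eps\to h(L,-u)^+$, and $[0,t_+(\eps))\subseteq S(\eps)$ together with $f\ge 0$ produces the lower integral bound.

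The hard part will be extracting the precise first-order asymptotics $t_\pm(\eps)/\eps\to h(L,-u)^+$ while controlling the curvature corrections: the quadratic term $t^2$, the cross term $\eps t\,h(L,-u)$, and the purely quadratic $\eps^2\|\ell^*\|^2$ must each be shown to be of lower order than the linear balance $t\sim\eps\, h(L,-u)$. A related delicate point is the degenerate regime $h(L,-u)\le 0$, where the thresholds must be forced to vanish so that the positive part appears; here one checks that $\psi_\eps(0)$ and $\phi_\eps(0)$ are nonpositive up to $O(\eps^2)$, giving $t_\pm(\eps)=o(\eps)$ and hence $t_\pm(\eps)/\eps\to 0$. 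Finally, it is the convexity of $\psi_\eps$ that guarantees the inner-ball inclusion on the entire remaining segment $(t_-(\eps),\delta]$ rather than only near the threshold, which is exactly what is needed for the set inclusion $S(\eps)\subseteq[0,t_-(\eps)]$.
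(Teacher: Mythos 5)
Your proposal is correct, and its skeleton coincides with the paper's: fix a good pair $(a,u)\in\hat{N}(K)$, use the inner tangent ball $B_i$ to show that all $t$ past a threshold $t_-(\eps)$ satisfy $a+tu+\eps L\subseteq B_i\subseteq K$, use the outer tangent ball $B_o$ together with a support point $\ell^*$ of $L$ in direction $-u$ to show that all $t$ below a threshold $t_+(\eps)$ satisfy $a+tu+\eps L\not\subseteq K$, and then prove $t_\pm(\eps)/\eps\to h(L,-u)^+$. Where you genuinely deviate is in how the thresholds are defined and estimated. The paper first encloses $\eps L$ in the cap $C(\eps)=\eps H^-_{-u}(h(L,-u))\cap B_{\eps r}(0)$, reduces both ball conditions to conditions on the disk $\eps(H(L,-u)\cap B_r(0))$, and then, by rotation invariance, sections by a $2$-plane and writes closed-form expressions for $t_\pm(\eps)$ via chord geometry in a circle; the limits are read off these explicit formulas. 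You instead keep $L$ itself, encode the two conditions in the functions $\psi_\eps(t)=\sup_{\ell\in L}\|(t-\delta)u+\eps\ell\|^2-\delta^2$ and $\phi_\eps(t)=\|(t+\rho)u+\eps\ell^*\|^2-\rho^2$, obtain the interval structure $[0,t_+(\eps))\subseteq S(\eps)\subseteq[0,t_-(\eps))$ from convexity of these functions, and extract the asymptotics from a first-order expansion; convexity also disposes of the degenerate case $h(L,-u)\le 0$, since $\psi_\eps(0)=O(\eps^2)$ while $\psi_\eps(\delta)\le-\delta^2/2$, and a convex function lies below its chord, forcing the first zero to be $O(\eps^2)$. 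Your version buys three things: it avoids the cap-to-disk reduction and the sectioning argument entirely; the claim that the ``good'' set is a full interval (which the paper essentially asserts when it says the indicator is $1$ on $[0,t_+(\eps))$ and $0$ past $t_-(\eps)$) is justified by convexity; and the lower bound needs only the single support point rather than the whole disk. The paper's version buys explicit formulas for $t_\pm(\eps)$, so the limit $t_\pm(\eps)/\eps\to h(L,-u)^+$ is a one-line computation with no expansion bookkeeping. Both arguments deliver exactly the statement, including the positive part $h(L,-u)^+$ and the restriction to almost all $(a,u)$ coming from (G2).
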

\begin{proof}
  Fix $(a,u)\in \hat{N}(K)$ such that the inner ball $B_i$ and outer
  ball $B_o$ exist as in (G2). Define the half-space
  \begin{displaymath}
    H^-_{-u}(h(L,-u)):=\{x\in\R^d:\langle x, -u\rangle\leq h(L,-u)\}. 
  \end{displaymath}
  There exists an $r>0$ such that $L\subseteq B_r(0)$, and,
  consequently, $\eps L\subseteq B_{\eps r}(0)$.  The set $\eps L$ is not
  only a subset of $B_{\eps r}(0)$, but also of
  \begin{displaymath}
    C(\eps):=\eps H^-_{-u}(h(L,-u))\cap B_{\eps r}(0).
  \end{displaymath}
  Since $L$ is compact, there is a point $l\in\partial L$ such that
  $\langle l,-u\rangle=h(L,-u)$, which is called the
  support point, and $\eps l\in \eps(H(L,-u)\cap B_r(0))$.

  For all $\eps<r^{-1}\min\{\delta_+,\delta_-,1\}$ and for
  $t\in[0,\delta_-)$, we have $(a+tu)\in (K\setminus K\ominus\eps L)$
  if and only if $a+tu+\eps L$ is not a subset of $K$.  Hence, for
  $t\in[0,\delta_-)$,
  \begin{align*}
    \{t\,:\, a+tu+\eps L \not\subseteq K\}
    &\subseteq\{t \,:\,a+tu+\eps L\not\subseteq B_i\}\\
    & \subseteq\{t \,:\,a+tu+C(\eps)\not\subseteq B_i\}\\
    &=\{t\,:\,a+tu+\eps(H(L,-u)\cap B_r(0))\not\subseteq B_i\},
  \end{align*}
  and 
  \begin{align*}
    \{t \,:\, a+tu+\eps L\not\subseteq K\}
    & =\{t \,:\,(a+tu+\eps L)\cap K^c\not=\emptyset\}\\
    &\supseteq \{t \,:\,(a+tu+\eps L)\cap \interior B_o\not=\emptyset\}\\
    &\supseteq \{t \,:\,(a+tu+\eps l)\in \interior B_o\}\\
    &\supseteq\{t \,:\,a+tu+\eps(H(L,-u)
      \cap B_r(0))\subseteq \interior B_o\}.
  \end{align*}
  We then define 
  \begin{align*}
    t_-(\eps)
    &:=0\vee\inf\{t\in(-\delta_+,\delta_-):
    a+tu+\eps(H(L,-u)\cap B_r(0))\subseteq B_i\},\\
    t_+(\eps)
    &:=0\vee\inf\{t\in(-\delta_+,\delta_-):
      \,a+tu+\eps(H(L,-u)\cap B_r(0))\not\subseteq \interior B_o\},
  \end{align*}
  where $\vee$ stands for supremum.
  
  The value of $t_-(\eps)$ is the distance between $a$ and a point of
  the segment $[a, a+\delta_-u]$. Since the set
  $a+tu+\eps(H(L,-u)\cap B_r(0))$ is invariant under any
  rotation which keeps $u$ unchanged, $t_-(\eps)$ can be calculated by sectioning
  this set with any 2-dimensional plane parallel to $u$ which
  contains $a$. For each $t\in(0,\delta_-)$, the section of $B_{\delta_-}(a+tu)$ is a circle
  $C$. Then $t_-(\eps)$ is the positive part of the sum of
  $\eps h(L,-u)$ and the distance between a chord of the
  circle $C$ of length $2(r^2\eps^2-\eps^2h(L,-u)^2)^{1/2}$
  and the point on the boundary $a$.
  Hence,
  \begin{displaymath}
    t_-(\eps)=(\eps
    h(L,-u)+\delta_- -(\delta_-^2-(r^2\eps^2-\eps^2h(L,-u)^2))^{1/2})^+,
  \end{displaymath}
  where $t_-(\eps)\eps^{-1}\to h(L,-u)^+$ as
  $\eps\downarrow0$. 

  With the same idea used to calculate $t_-(\eps)$, we can see that
  \begin{displaymath}
    t_+(\eps)=(\eps
    h(L,-u)-(\delta_+-(\delta_+^2-(r^2\eps^2-\eps^2h(L,-u)^2))^{1/2}))^+
  \end{displaymath}
  and $t_+(\eps)\eps^{-1}\to h(L,-u)^+$ as $\eps\downarrow0$.
  
  Clearly $0\leq t_+(\eps)\leq t_-(\eps)<\delta^-$ for
  $\eps<r^{-1}\min\{\delta_+,\delta_-,1\}$. The points of
  discontinuity of $\one_{K\setminus K\ominus\eps L}(a+tu)$ happen for
  $t\in[t_+(\eps),t_-(\eps)]$. The identity function
  $\one_{K\setminus K\ominus\eps L}(a+tu)$ is $1$ for all
  $t\in[0,t_+(\eps))$ and is $0$ for all $t\geq t_-(\eps)$.
  
  The proof finishes by observing that $f$ is integrable and positive
  on $[0,\delta^-)$.
\end{proof}

We impose the following conditions on a finite measure $\mu$
supported by $K$.

\begin{enumerate}
\item[(M1)] The measure $\mu$ has compact support and it is absolutely continuous with density $f$.
\item[(M2)] There exists an $\alpha>-1$ such that, for almost all
  $(a,u)\in\hat{N}(K)$, 
  \begin{displaymath}
    \lim_{t\downarrow0}\frac{f(a+tu)}{t^\alpha}=\hat{g}(a,u)=:g(a)\in[0,+\infty),
  \end{displaymath}
  where the function $\hat{g}$ is strictly positive on a subset of
  $\partial K$ of positive measure and is bounded almost everywhere.
\end{enumerate}
  The argument of the limit function $\hat{g}$ is the vector $(a,u)\in \hat{N}(K)$. Since $K$ is gentle, for $\sH^{d-1}$-almost all $a\in\partial K$ there is a unique $u\in\Sphere$ such that $(a,u)\in \hat{N}(K)$, then $g$ is well-defined.

  As a pointwise limit of measurable
  functions, $g$ is also measurable. From its definition and since the support of
  $f$ on $K$ is compact, $g$ has compact support.

\begin{proposition}\label{prop1}
  Let $K$ be a gentle set and let $L$ be a compact set in $\R^d$. Let $\mu$ be a finite measure on $K$ which has compact support, is absolutely continuous in a neighbourhood of $\partial K$ with density $f$ and satisfies (M2). Then
  \begin{displaymath}
    \lim_{\eps\downarrow0}\frac{\mu(K\setminus K\ominus\eps^\gamma
      L)}{\eps}
    =\int_{N(K)}g(a)\frac{(h(L,u)^+)^{\alpha+1}}{\alpha+1} C_{d-1}(K,d(a,u)),
  \end{displaymath}
  where $\gamma=(\alpha+1)^{-1}$.
\end{proposition}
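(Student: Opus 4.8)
The plan is to substitute $\delta=\eps^{\gamma}$, so that $\delta^{\alpha+1}=\eps$, and prove the equivalent statement that $\delta^{-(\alpha+1)}\mu(K\setminus K\ominus\delta L)$ converges to the right-hand side as $\delta\downarrow0$. First I would localise the problem near $\partial K$. Since $L\subseteq B_r(0)$ for some $r>0$, we have $\delta L\subseteq B_{\delta r}(0)$, and as the Minkowski difference reverses inclusions, $K\ominus B_{\delta r}(0)\subseteq K\ominus\delta L$. Because $K\ominus B_{\delta r}(0)=\{x\in K:\operatorname{dist}(x,K^c)\geq\delta r\}$, this gives
\[
  K\setminus K\ominus\delta L\subseteq\{x\in K:\operatorname{dist}(x,K^c)<\delta r\},
\]
so for all sufficiently small $\delta$ the shell lies in the neighbourhood of $\partial K$ on which $\mu$ is absolutely continuous, and (M1) yields $\mu(K\setminus K\ominus\delta L)=\int_{K\setminus K\ominus\delta L}f\,dx$. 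The interior mass of $\mu$ does not contribute asymptotically.

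Next I would pass to inner normal coordinates. Using the Steiner-type/coarea machinery for gentle sets from \cite{MR2304055}, together with the normal-bundle parameterisation of \cite{MR2031455}, I would write
\[
  \int_{K\setminus K\ominus\delta L}f(x)\,dx
  =\int_{\hat{N}(K)}\int_0^{\delta(K,a,u)}
  f(a+tu)\,\one_{K\setminus K\ominus\delta L}(a+tu)\,J(a,u,t)\,dt\;
  C^*_{d-1}(K,d(a,u)),
\]
where $u$ is the inner normal and $J$ is the associated Jacobian, which satisfies $J(a,u,0^+)=1$ and $J(a,u,t)=1+O(t)$ (the $O(t)$ term being governed by the generalised curvatures, which are bounded a.e.\ by the non-degenerate inner and outer balls of (G2)). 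Since the shell has thickness $O(\delta)$ and the fibre density behaves like $t^{\alpha}$, the contribution of $J-1$ is of order $\delta^{\alpha+2}=o(\delta^{\alpha+1})$, exactly as in the volume case; hence to leading order one may replace $J$ by $1$.

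Then I would treat the fibre integrals using Lemma~\ref{lemma1} and (M2). For almost all $(a,u)\in\hat{N}(K)$, Lemma~\ref{lemma1} sandwiches $I(a,u,\delta):=\int_0^{\delta(K,a,u)}f(a+tu)\one_{K\setminus K\ominus\delta L}(a+tu)\,dt$ between $\int_0^{t_\pm(\delta)}f(a+tu)\,dt$, with $t_\pm(\delta)/\delta\to h(L,-u)^+$. By (M2), $f(a+tu)\sim g(a)t^{\alpha}$ as $t\downarrow0$, so for any $\eta>0$ and $\delta$ small enough $(g(a)-\eta)t^{\alpha}\le f(a+tu)\le(g(a)+\eta)t^{\alpha}$ on the relevant range, whence $\int_0^{t_\pm(\delta)}f(a+tu)\,dt\sim g(a)\,t_\pm(\delta)^{\alpha+1}/(\alpha+1)$. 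This gives the pointwise limit
\[
  \lim_{\delta\downarrow0}\frac{I(a,u,\delta)}{\delta^{\alpha+1}}
  =g(a)\,\frac{(h(L,-u)^+)^{\alpha+1}}{\alpha+1}
  \qquad\text{for almost all }(a,u)\in\hat{N}(K).
\]

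Finally I would integrate over $\hat{N}(K)$ and exchange limit and integral by dominated convergence: a dominating function integrable with respect to $C^*_{d-1}(K,\cdot)$ is available because $g$ is bounded with compact support, $h(L,-u)^+\le r$, and the curvature measure is finite on that support. The resulting limit $\int_{\hat{N}(K)}g(a)(h(L,-u)^+)^{\alpha+1}/(\alpha+1)\,C^*_{d-1}(K,d(a,u))$ equals the claimed integral over $N(K)$ after the reflection $(a,u)\mapsto(a,-u)$ relating $C^*_{d-1}(K,\cdot)$ and $C_{d-1}(K,\cdot)$, using that $g$ depends only on $a$; together with $\delta^{\alpha+1}=\eps$ this proves the statement. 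The main obstacle is the second step: invoking the coarea decomposition in inner normal coordinates for a merely gentle set (which need not have positive reach), showing the inner skeleton is negligible, and controlling the Jacobian so that the $J-1$ term is genuinely of lower order; the remaining steps are the fibrewise sandwiching of Lemma~\ref{lemma1} combined with (M2) and a routine application of dominated convergence.
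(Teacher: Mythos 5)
Your architecture is parallel to the paper's: localise near $\partial K$ via (M1), disintegrate in normal coordinates over the inner normal bundle, sandwich the fibre integrals with Lemma~\ref{lemma1} and (M2), pass to the limit by dominated convergence, and convert $C^*_{d-1}$ into $C_{d-1}$ by the reflection. Those outer steps are fine. The genuine gap is exactly the step you flag as the main obstacle: the coarea formula
\begin{displaymath}
  \int_{K\setminus K\ominus\delta L}f(x)\,dx
  =\int_{\hat{N}(K)}\int_0^{\delta(K,a,u)}
  f(a+tu)\,\one_{K\setminus K\ominus\delta L}(a+tu)\,J(a,u,t)\,dt\;
  C^*_{d-1}(K,d(a,u)),
\end{displaymath}
with a pointwise Jacobian $J=1+O(t)$ ``bounded a.e.\ by the non-degenerate inner and outer balls of (G2)'', is not available for gentle sets. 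First, (G2) only asserts the \emph{existence} of non-degenerate inner and outer balls at $\sH^{d-1}$-almost every boundary point; it gives no uniform lower bound on their radii, hence no almost-everywhere bound on generalised curvatures, so the $O(t)$ term is not a.e.\ controlled and your estimate that the $J-1$ contribution is $O(\delta^{\alpha+2})$ does not follow. Second, and more fundamentally, for a gentle set (which need not have positive reach) the disintegration of Lebesgue measure in normal coordinates cannot be written as a density against the single measure $C^*_{d-1}(K,\cdot)$: already for a polytope the lower-order support measures of $\partial K$ are concentrated on lower-dimensional faces and are singular with respect to the surface-type measure, so no pointwise Jacobian can absorb them.

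The correct tool, and the one the paper uses, is the exact identity of Proposition~4 in \cite{MR2304055}:
\begin{displaymath}
  \int_{\R^d}f(x)\one_{K\setminus K\ominus\eps^\gamma L}(x)\,dx
  =\sum_{i=1}^d i\kappa_i\int_{N(\partial K)}\int_0^{\delta(\partial K,a,u)}
  t^{i-1}f(a+tu)\one_{K\setminus K\ominus\eps^\gamma L}(a+tu)\,dt\,
  \nu_{d-i}(\partial K,d(a,u)),
\end{displaymath}
where the $\nu_{d-i}(\partial K,\cdot)$ are the (signed) support measures of the boundary. The point is that the higher-order terms are killed by an \emph{integrated} estimate, not a pointwise one: the indicator is supported in a shell of thickness $O(\eps^\gamma)$, so the $i$-th summand divided by $\eps$ is bounded by a constant times $\eps^{\gamma(i+\alpha)-1}=\eps^{\gamma(i-1)}$ multiplied by the essential supremum of $f(a+tu)/t^\alpha$ and by $\lvert\nu_{d-i}\rvert(\partial K,\cdot)$ evaluated on a fixed compact set; local finiteness of the total variation of the support measures (which is what gentle sets do provide) then gives convergence to $0$ for $i\geq2$. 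The $i=1$ term carries the limit, using $2\nu_{d-1}(\partial K,\cdot)=C_{d-1}(K,\cdot)+C^*_{d-1}(K,\cdot)$ and the fact that $f$ vanishes outside $K$, after which your fibrewise sandwich, dominated convergence, and reflection steps go through verbatim. As written, however, your step 2 asserts a representation that fails in the stated generality, so the proof is incomplete without this replacement.
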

\begin{proof}
  There exists a constant $s>0$ such that the finite measure $\mu$ is
  absolutely continuous on $\partial K+B_s(0)$ since it is a
  neighbourhood of $\partial K$. Moreover, there exists an $r>0$ such
  that $L\subseteq B_r(0)$. 
  
  For $\eps<(s/r)^{1/\gamma}$, Proposition~4 from
  \cite{MR2304055} yields that 
  \begin{align}
    & \mu(K\setminus K\ominus\eps^\gamma L) \notag \\
    & =  \int_{\R^d}f(x) \one_{K\setminus K\ominus\eps^\gamma L}(x)\,dx \label{eqsum}\\
    & = \sum_{i=1}^d i\kappa_i \underset{N(\partial K)}{\int}\overset{\delta(\partial K,a,u)}{\underset{0}{\int}}t^{i-1}f(a+tu) \one_{K\setminus K\ominus\eps^\gamma L}(a+tu)\,dt\,\nu_{d-i}(\partial K,d(a,u)),\notag
  \end{align}
  where the signed measures
  $\nu_0(\partial K,\cdot),\dots,\nu_{d-1}(\partial K,\cdot)$ are
  called support measures of $\partial K$ and the constant $\kappa_i$ is the volume of the $i$-dimensional unit ball, for $i=1,\dots,d$. The signed measures $\nu_0(\partial K,\cdot),\dots,\nu_{d-1}(\partial K,\cdot)$ have locally finite total variation from Corollary 2.5 and (2.13) in \cite{MR2031455}.
  
  The hypotheses of Proposition~4 in \cite{MR2304055} require that $f$
  is bounded. The statement also holds for unbounded
  integrable functions. The sequence of functions
  $f_n:=f\one_{\{f\leq n\}}$ is monotone and each $f_n$ is bounded. By
  the monotone convergence, the identity (\ref{eqsum}) is also
  satisfied by unbounded integrable functions.
  
  All summands of (\ref{eqsum}) with $i\geq2$ are of order
  $o(\eps)$ as $\eps\downarrow0$. The support of
  $\one_{K\setminus K\ominus\eps^\gamma L}$ is contained in
  $\partial K+B_{2r\eps^\gamma }(0)$, so that
  \begin{align*}
    & \Big\lvert \frac{1}{\eps}\int_{N(\partial K)}\int^{\delta(\partial K,a,u)}_0 t^{i-1} f(a+tu)\one_{K\setminus K\ominus\eps^\gamma L}(a+tu)\,dt\,\nu_{d-i}(\partial K,d(a,u))\Big\rvert \\
    & \leq  \frac{1}{\eps}\int^{2r\eps^\gamma}_0\int_{N(\partial K)} t^{i-1+\alpha} \frac{f(a+tu)}{t^\alpha}\,dt\,\lvert\nu_{d-i}\rvert(\partial K,d(a,u))\\
    & \leq \frac{(2r\eps^\gamma)^{i+\alpha}}{(i+\alpha)\eps}\underset{((a,u),t)\in N(\partial K)\times[0,2r\eps^\gamma)}{\esssup}\frac{f(a+tu)}{t^\alpha}\lvert\nu_{d-i}\rvert(\partial K,\partial K(\eps)\times\Sphere)\to0,
  \end{align*}
  as $\eps\downarrow0$, for $i\not=1$, $\gamma=(\alpha+1)^{-1}$, $\alpha>-1$, and where 
  \begin{displaymath}
      \partial K(\eps):=\{x\in\partial K:x+y\in\supp f \text{ for }y\in B_{2r\eps^\gamma}(0)\},
  \end{displaymath}
  which is compact for all sufficiently small $\eps$ since $\supp f$ is compact on $\partial K$. It follows that $\lvert\nu_{d-i}\rvert(\partial K,\partial K(\eps)\times\Sphere)$ is finite for each $i=1,\dots,d-1$, since $\partial K(\eps)$ is bounded and the measures have locally finite total variation. Furthermore, it follows from the assumption (M2) that $f(a+tu)/t^\alpha$ is bounded for almost all $((a,u),t)\in N(\partial K)\times[0,2r\eps^\gamma)$ for $\eps$ small enough.
  
  Proposition~4.1 and Proposition~5.1 from \cite{MR2031455} yield that
  \begin{displaymath}
    2\nu_{d-1}(\partial K,\cdot)=C_{d-1}(K,\cdot)+C_{d-1}^*(K,\cdot).
  \end{displaymath}
  For $\kappa_1=2$,
  \begin{align*}
    & \lim_{\eps\downarrow0}\frac{\mu(K\setminus K\ominus\eps^\gamma L)}{\eps} \\
    & = \lim_{\eps\downarrow0}\frac{2}{\eps}\int_{N(\partial K)}\int^{\delta(\partial K,a,u)}_0 f(a+tu) \one_{K\setminus K\ominus\eps^\gamma L}(a+tu)\,dt\,\nu_{d-1}(\partial K,d(a,u))\\
    & = \lim_{\eps\downarrow0}\frac{2}{2\eps}\int_{N(K)}\int^{\delta_+}_0 f(a+tu) \one_{K\setminus K\ominus\eps^\gamma L}(a+tu)\,dt\,C_{d-1}(K,d(a,u))\\
    & +\frac{2}{2\eps}\int_{\hat{N}(K)}\int^{\delta_-}_0 f(a+tu) \one_{K\setminus K\ominus\eps^\gamma L}(a+tu)\,dt\,C_{d-1}^*(K,d(a,u))\\
    & = \lim_{\eps\downarrow0}\frac{1}{\eps}\int_{\hat{N}(K)}\int^{\delta_-}_0 f(a+tu) \one_{K\setminus K\ominus\eps^\gamma L}(a+tu)\,dt\,C_{d-1}^*(K,d(a,u)),
  \end{align*}
  since the density function $f$ vanishes outside $K$.
  Following Lemma~\ref{lemma1}, for almost all $(a,u)\in\hat{N}(K)$,
  \begin{align*}
    g(a)\frac{(h(L,-u)^+)^{\alpha+1}}{\alpha+1} & = \lim_{\eps\downarrow0} \inf_{s\in[0,t_+(\eps^\gamma))}\frac{f(a+su)}{s^\alpha}\frac{1}{\eps}\int_0^{t_+(\eps^\gamma)}t^\alpha\,dt\\
    &\leq \lim_{\eps\downarrow0}\frac{1}{\eps}\int_0^{t_+(\eps^\gamma)}f(a+tu)\,dt\\
    & \leq \lim_{\eps\downarrow0}\frac{1}{\eps}\int^{\delta_-}_0f(a+tu)\one_{K\setminus K\ominus\eps^\gamma L}(a+tu)\,dt\\
    &\leq\lim_{\eps\downarrow0}\frac{1}{\eps}\int_0^{t_-(\eps^\gamma)}f(a+tu)\,dt,\\
    & \leq \lim_{\eps\downarrow0} \sup_{s\in[0,t_- (\eps^\gamma))}\frac{f(a+su)}{s^\alpha}\frac{1}{\eps}\int_0^{t_-(\eps^\gamma)}t^\alpha\,dt\\
    & = g(a)\frac{(h(L,-u)^+)^{\alpha+1}}{\alpha+1}.
  \end{align*}

  Thus, 
  \begin{multline*}
    F_\eps(a,u):=\frac{1}{\eps}\int^{\delta_-}_0f(a+tu)\one_{K\setminus
      K\ominus\eps^\gamma L}(a+tu)\,dt\\
    \to g(a)\frac{(h(L,-u)^+)^{\alpha+1}}{\alpha+1}
    \quad \text{as}\; \eps\downarrow 0
  \end{multline*}
  for almost all $(a,u)\in\hat{N}(K)$. The limiting function is
  bounded and has compact support, because $g$ is a bounded function with compact support and $L$ is compact.
  
  The sequence $F_\eps$ is bounded for all $\eps>0$ and for almost all
  $(a,u)\in\hat{N}(K)\cap((\supp f\cap \partial K)\times \Sphere)$,
  which is compact. We can then find an upper bound and apply the
  dominated convergence theorem.

  We conclude by noticing that
  \begin{multline*}
       \int_{\hat{N(K)}}g(a)\frac{(h(L,-u)^+)^{\alpha+1}}{\alpha+1} C^*_{d-1}(K,d(a,u))=\\
       \int_{N(K)}g(a)\frac{(h(L,u)^+)^{\alpha+1}}{\alpha+1}
       C_{d-1}(K,d(a,u)). 
  \end{multline*}
\end{proof}

\begin{remark}
  For $d\geq2$, the result does not hold for $\alpha\leq-1$. The reason is
  due to the summands of (\ref{eqsum}). There is no
  $\gamma\in\R$ such that the summand with index $i=1$ converges and
  others do not converge to 0. For $d=1$ there is only one summand, so
  this issue does not emerge.
\end{remark}

\begin{example}
  Let $f(x)=\|\xi_{\partial K}(x)-x\|^\alpha$ with
  $\alpha\in(-1,\infty)$ and $x\in K$, where $K$ is a gentle compact set, and $f=0$
  outside $K$. Then
  \begin{displaymath}
    \lim_{t\downarrow0}\frac{f(a+tu)}{t^\alpha}=\|u\|^\alpha=1
  \end{displaymath}
  for almost all $(a,u)\in\hat{N}(K)$. In this case $g(a)=1$ for all $a\in\partial K$ and
  \begin{displaymath}
    \lim_{\eps\downarrow0}\frac{1}{\eps} \int_{\R^d}f(x) \one_{K\setminus K\ominus\eps^\gamma L}(x)\,d\sH^d(x) = \int_{N(K)}\frac{(h(L,u)^+)^{\alpha+1}}{\alpha+1} C_{d-1}(K,d(a,u)).
  \end{displaymath}
\end{example}

\section{Limit theorem for intersections of translated elements of the
  extended convex ring}
\label{sec:thm}

A random closed set $X$ in $\R^d$ is a measurable map from a
probability space to the space $\sF$ of closed sets in $\R^d$ endowed
with the Borel $\sigma$-algebra generated by the Fell topology, see
\cite{TRS}. The base of the Fell topology consists of finite
intersections of the sets $\{F\in\sF: F\cap G\not=\emptyset\}$ and
$\{F\in\sF: F\cap L=\emptyset\}$ for all open $G$ and compact $L$ in
$\R^d$.

It is known that the distribution of a random closed set is uniquely
determined by its capacity functional defined as
\begin{displaymath}
  T_X(L):=\Prob{X\cap L\not=\emptyset},
\end{displaymath}
where $L$ runs through the family of compact sets in $\R^d$. A
sequence of random closed sets $(X_n)_{n\geq 1}$ in $\R^d$ converges
in distribution to a random closed set $X$ (notation $X_n\dto X$) if
the corresponding probability measures on $\sF$ weakly converge. This
is the case if and only if 
\begin{displaymath}
   T_{X_n}(L)\to T_X(L)\quad \text{as}\; n\to\infty
\end{displaymath}
for each compact set $L$ such that $T_X(L)=T_X(\interior(L))$, see
Theorem~1.7.7 from \cite{TRS}.

We recall that a convex body is a non-empty compact convex set.  The
extended convex ring $\sU$ is a family of closed sets which are
locally finite unions of convex bodies of $\R^d$, meaning that each
compact set intersects at most a finite number of convex bodies that
generate an element of $\sU$. We assume that the empty set belongs
to $\sU$.  Clearly, the family $\sU$ is closed under intersections.

Let $K$ be a non-empty regular closed element of $\sU$, and let $\mu$ be
a probability measure on $K$ satisfying (M1) and (M2). Consider the set
\begin{displaymath}
  \Xi_n:=\{\xi_1,\dots,\xi_n\}
\end{displaymath}
composed of $n$ independent points in $K$ distributed according to
$\mu$.  Let $X_n$ be a random closed set defined as follows
\begin{equation}\label{def:X_n}
    X_n:=\bigcap_{i=1}^n(K-\xi_i).
\end{equation}

Let $\sP_K:=\{(t_i,u_i):i\geq 1\}$ be a Poisson point process on
$(0,\infty)\times\Sphere$ with intensity measure $\nu$, which is the
product of an absolutely continuous measure on $(0,\infty)$ with
density $t^\alpha$ and a measure $\hat{\nu}$ on $\Sphere$ defined as
\begin{equation}\label{support}
    \hat{\nu}(D):=\int_{N(K)}\one_{\{u\in D\}}g(a)C_{d-1}(K,d(a,u)),
\end{equation}
for $D\subset\Sphere$, where $g(a)$ is given by the property (M2) of
$\mu$.

The point process $\sP_K$ corresponds to the family of
hyperplanes $\{x\in\R^d:\langle x,u_i\rangle=t_i\}$, $i\geq1$, which
splits the space into disjoint cells and is said to be a Poisson
hyperplane tessellation of $\R^d$, see \cite{SIG}. The zero cell of
this tessellation is the random convex set
\begin{equation}\label{eq:Z}
  Z=\bigcap_{i\geq1} \{x\in\R^d:\langle x,u_i\rangle\leq t_i\}.
\end{equation}
We now formulate our main result.

\begin{theorem}
  \label{thm:sets-convergence}
  Assume that $\mu$ satisfies (M1) and (M2) on $K$ for $\alpha>-1$. If $K$ is a regular closed element of $\sU$, then
  \begin{displaymath}
    n^{\gamma} \cl(X_n^c) \dto \cl(Z^c) \quad \text{as}\; n\to\infty,
  \end{displaymath}
  where $\gamma=(\alpha+1)^{-1}$. Furthermore, if $K$ is a convex set with non-empty interior, then
  \begin{displaymath}
    n^{\gamma} X_n \dto Z \quad \text{as}\; n\to\infty.
  \end{displaymath}
\end{theorem}
\begin{definition}
    A random closed set $X$ in $\R^d$ is said to be regular closed if $X$ almost surely belongs to the family $\mathcal{R}$ of regular closed sets. 
\end{definition}
For further details about regular closed random sets, see Section~1.1.7 of \cite{TRS}.

The following result is a variant of Theorem~7.5 from 
\cite{ARXIV} with an identical proof. 

\begin{lemma}\label{lemma:conv-reg-clo}
  Let $Y$ and $Y_n$, $n\in\N$ be regular closed random sets in $\R^d$. If
  \begin{displaymath}
    \Prob{L\subseteq Y_n}\to\Prob{L\subseteq Y} \quad \text{as }n\to\infty
  \end{displaymath}
  for all regular closed compact sets $L$ such that
  $\Prob{L\subseteq Y}=\Prob{L\subseteq \interior Y}$, then $\cl(Y_n^c)\dto \cl(Y)$
  as $n\to\infty$.
\end{lemma}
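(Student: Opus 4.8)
The plan is to deduce the statement from the capacity functional criterion by passing to the complementary sets. Set $Y:=\cl(X^c)$ and $Y_n:=\cl(X_n^c)$; these are again regular closed random sets, and since $X$ is regular closed one has $\interior X=(\cl(X^c))^c=Y^c$. By Lemma~\ref{lemma:reg-clo} the map $g\colon F\mapsto\cl(F^c)$ is a bicontinuous bijection of $\mathcal{R}$ onto itself, and it is in fact an involution on $\mathcal{R}$ because $g(g(F))=\cl(\interior F)=F$. The continuous mapping theorem applied to $g$ in both directions therefore gives $X_n\dto X$ if and only if $Y_n\dto Y$, so it suffices to establish $Y_n\dto Y$. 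By Theorem~1.7.7 from \cite{TRS} this reduces to showing $T_{Y_n}(L)\to T_Y(L)$ for every $L$ in a convergence-determining class of compact continuity sets of $T_Y$; I would take this class to be the finite unions of closed balls, each of which is regular closed and stays in the class under the outer parallel operation $L\mapsto\cl(L+B_\eps(0))$ used below.

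The next step is to rewrite the two relevant functionals as inclusions. Because $\interior X=Y^c$, we have $L\subseteq\interior X$ precisely when $L\cap Y=\emptyset$, so $1-T_Y(L)=\Prob{L\subseteq\interior X}$ and likewise $1-T_{Y_n}(L)=\Prob{L\subseteq\interior X_n}$. For a regular closed compact $L$ the events $\{L\subseteq X\}$ and $\{\interior L\subseteq\interior X\}$ coincide, since $L\subseteq X$ gives $\interior L\subseteq\interior X$ by monotonicity of the interior, while $\interior L\subseteq\interior X$ gives $L=\cl(\interior L)\subseteq\cl(\interior X)\subseteq X$. Hence $\Prob{L\subseteq X}=\Prob{\interior L\subseteq\interior X}=1-T_Y(\interior L)$, and the continuity requirement $T_Y(L)=T_Y(\interior L)$ is exactly the hypothesis condition $\Prob{L\subseteq X}=\Prob{L\subseteq\interior X}$. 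Thus for every admissible test set $L$ the assumption delivers $\Prob{L\subseteq X_n}\to\Prob{L\subseteq X}$.

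It remains to upgrade this to $T_{Y_n}(L)\to T_Y(L)$, that is, to $\Prob{L\subseteq\interior X_n}\to\Prob{L\subseteq\interior X}$, and this is the step I expect to be the main obstacle: the hypothesis controls the closed inclusion $\{L\subseteq X_n\}$, whereas the capacity functional of $Y_n$ records the open inclusion $\{L\subseteq\interior X_n\}$. One inequality is immediate, since $\Prob{L\subseteq\interior X_n}\leq\Prob{L\subseteq X_n}$ together with the convergence just obtained yields $\liminf_n T_{Y_n}(L)\geq T_Y(L)$. For the reverse bound I would approximate $L$ from outside: with $L_\eps:=\cl(L+B_\eps(0))$ one has $L\subseteq\interior L_\eps$, so $L_\eps\subseteq X_n$ forces $L\subseteq\interior X_n$ and therefore $\Prob{L\subseteq\interior X_n}\geq\Prob{L_\eps\subseteq X_n}$. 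The maps $\eps\mapsto\Prob{L_\eps\subseteq X}$ and $\eps\mapsto\Prob{L_\eps\subseteq\interior X}$ are monotone and hence agree off a countable set, so I can pick $\eps_k\downarrow0$ at which they agree; then each $L_{\eps_k}$ is a finite union of balls and an admissible test set. Applying the hypothesis to $L_{\eps_k}$ gives $\liminf_n\Prob{L\subseteq\interior X_n}\geq\Prob{L_{\eps_k}\subseteq X}$, and letting $k\to\infty$, where $\Prob{L_{\eps_k}\subseteq X}\uparrow\Prob{L\subseteq\interior X}$ by continuity of the measure along the increasing events $\{L_{\eps_k}\subseteq X\}\uparrow\{L\subseteq\interior X\}$, yields $\liminf_n\Prob{L\subseteq\interior X_n}\geq\Prob{L\subseteq\interior X}$, i.e. $\limsup_n T_{Y_n}(L)\leq T_Y(L)$. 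Combining the two bounds gives $T_{Y_n}(L)\to T_Y(L)$ for all admissible $L$, which by the criterion proves $Y_n\dto Y$ and hence $X_n\dto X$.
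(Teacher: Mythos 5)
Your argument is, in substance, exactly the proof the paper intends: the paper disposes of this lemma by invoking Lemma~\ref{lemma:reg-clo} and running ``the identical proof'' of Theorem~7.5 of \cite{ARXIV}, which is precisely your reduction --- pass to $Y_n=\cl(X_n^c)$ and $Y=\cl(X^c)$ via the bicontinuous bijection, translate inclusion functionals of $X_n$ into capacity functionals of $Y_n$, and close the gap between the closed inclusion $\{L\subseteq X_n\}$ and the open inclusion $\{L\subseteq\interior X_n\}$ by outer parallel approximation. Two details in your write-up need repair. First, the justification that the maps $\eps\mapsto\Prob{L_\eps\subseteq X}$ and $\eps\mapsto\Prob{L_\eps\subseteq\interior X}$ ``are monotone and hence agree off a countable set'' is a non sequitur: two monotone functions need not agree anywhere (take the constants $0$ and $1$). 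The claim is true, but it requires the interleaving relation you already have the ingredients for: if $\eps<\eps'$, then $L_\eps\subseteq L_\eps+\interior B_{\eps'-\eps}(0)\subseteq\interior L_{\eps'}$, hence
\begin{displaymath}
  \Prob{L_{\eps'}\subseteq X}\;\leq\;\Prob{L_\eps\subseteq\interior X}\;\leq\;\Prob{L_\eps\subseteq X},
\end{displaymath}
so the two functions coincide at every continuity point of the monotone map $\eps\mapsto\Prob{L_\eps\subseteq X}$, and such points have countable complement. Second, Theorem~1.7.7 of \cite{TRS}, as quoted in the paper, demands convergence of capacity functionals on \emph{all} compact continuity sets; your restriction to finite unions of closed balls rests on the separate (standard, also found in \cite{TRS}) fact that this family is a convergence determining class, and this should be cited explicitly rather than folded into Theorem~1.7.7. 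This restriction is not cosmetic: your argument genuinely cannot handle compact test sets that fail to be regular closed, since the hypothesis of the lemma says nothing about them, so the convergence-determining-class result is an essential pillar of the proof.
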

\begin{proof}
    The family of regular closed sets is a separating class,  see Definition 1.1.48 in \cite{TRS}. It follows from Corollary 1.7.14 in \cite{TRS} that for the convergence in distribution it suffices to check that
    \begin{displaymath}
        \Prob{\cl(Y_n^c)\cap L\not=\emptyset}\to\Prob{\cl(Y^c)\cap L\not=\emptyset} \quad \text{as }n\to\infty,
    \end{displaymath}
    for all regular closed compact $L$, which are continuity sets for $\cl(Y^c)$. The latter means that
    \begin{displaymath}
        \Prob{\cl(Y^c)\cap L=\emptyset}=\Prob{\cl(Y^c)\cap \interior(L)=\emptyset}.
    \end{displaymath}
    Fix a regular closed compact set $L$ which is a continuity set. Since
    \begin{displaymath}
        \Prob{\cl(Y^c)\cap L=\emptyset}=\Prob{L\subseteq \interior(Y)}
    \end{displaymath}
    and
    \begin{displaymath}
        \Prob{\cl(Y^c)\cap \interior(L)=\emptyset}=\Prob{\interior(L)\subseteq \interior(Y)},
    \end{displaymath}
    we conclude that 
    \begin{displaymath}
        \Prob{L\subseteq Y}\leq\Prob{\interior(L)\subseteq \interior(Y)}=\Prob{L\subseteq \interior(Y)}\leq\Prob{L\subseteq Y},
    \end{displaymath}
    so that $\Prob{L\subseteq Y}=\Prob{L\subseteq \interior(Y)}$.

    Let $\eps_k$ be a sequence of positive numbers such that $\eps_k\downarrow0$ as $k\to\infty$, and
    \begin{displaymath}
        \Prob{L+B_{\eps_k}\subseteq Y}=\Prob{L+B_{\eps_k}\subseteq \interior(Y)}.
    \end{displaymath}
    Sending $n\to\infty$ in the chain of inequalities 
    \begin{displaymath}
        \Prob{L+B_{\eps_k}\subseteq Y_n}\leq\Prob{L\subseteq \interior(Y_n)}=\Prob{\cl(Y_n^c)\cap L=\emptyset}\leq\Prob{L\subseteq Y_n}.
    \end{displaymath}
    Then, following $\Prob{L\subseteq Y_n}\to\Prob{L\subseteq Y}$, we conclude that
    \begin{align*}
        \Prob{L+B_{\eps_k}\subseteq Y}& \leq\liminf_{n\to\infty}\Prob{\cl(Y_n^c)\cap L=\emptyset}\\
        & \leq \limsup_{n\to\infty}\Prob{\cl(Y_n^c)\cap L=\emptyset}\leq \Prob{L\subseteq Y}.
    \end{align*}
    Finally, note that
    \begin{displaymath}
        \Prob{L+B_{\eps_k}\subseteq Y}\uparrow\Prob{L\subseteq \interior(Y)}=\Prob{L\subseteq Y} \quad \text{as }k\to\infty.
    \end{displaymath}
\end{proof}

\begin{remark}\label{rem:gentle}
  By Proposition 2 in \cite{MR2304055}, each regular closed set in the
  convex ring is gentle. Since properties (G1) and (G2) defining a
  gentle set are local, each regular closed element of the extended
  convex ring is also gentle.
\end{remark} 

In general, the family $\sR$ of regular closed sets is not closed
under the intersection. The following lemmas show that $X_n$ is a regular closed random set for each $n\in\N$.

\begin{lemma}\label{lemma:intersection}
  Let $A$ and $B$ be regular closed elements of $\sU$. Then
  \begin{displaymath}
    C:=\{x\in\R^d: A\cap(B-x)\not\in \sR\}
  \end{displaymath}
  is measurable, and its Lebesgue measure is zero.
\end{lemma}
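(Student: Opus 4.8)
The plan is to reduce the problem to intersections of pairs of full-dimensional convex bodies, to identify the exceptional translations for each pair with the boundary of a Minkowski sum (hence a null set), and then to invoke completeness of Lebesgue measure so that both the measurability and the measure-zero claim follow simultaneously.

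First I would fix decompositions $A=\bigcup_i A_i$ and $B=\bigcup_j B_j$ into convex bodies witnessing membership in $\sU$, and argue that the lower-dimensional summands may be discarded. Since $A$ is regular closed, $A=\cl(\interior A)$; for any $p\in\interior A$, local finiteness reduces a small ball around $p$ to a \emph{finite} union of the closed sets $A_i$, and the Baire category theorem forces at least one such $A_i$ to have nonempty interior. Hence $\interior A$, and therefore $A$, lies in the closure of the union of the full-dimensional $A_i$; as that union is itself closed (a locally finite union of closed sets), we may assume every $A_i$, and likewise every $B_j$, is a convex body with nonempty interior, and is therefore regular closed.

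Next, for each pair $(i,j)$ I would analyse $A_i\cap(B_j-x)$. Writing $M_{ij}:=B_j\oplus\check{A}_i$, a direct computation gives $A_i\cap(B_j-x)\neq\emptyset$ if and only if $x\in M_{ij}$, while $A_i\cap(B_j-x)$ has nonempty interior if and only if $x\in\interior B_j\oplus\interior\check{A}_i$. The latter is an open convex set whose closure is $M_{ij}$, so it coincides with $\interior M_{ij}$; consequently the set of $x$ for which $A_i\cap(B_j-x)$ is a nonempty convex set with empty interior is exactly $\partial M_{ij}$, which has $V_d$-measure zero because $M_{ij}$ is a convex body. Putting $N:=\bigcup_{i,j}\partial M_{ij}$, local finiteness makes this a countable union, so $N$ is a Borel set with $V_d(N)=0$.

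Finally I would show $C\subseteq N$. If $x\notin N$, then for every pair $(i,j)$ the convex set $A_i\cap(B_j-x)$ is either empty or full-dimensional, hence regular closed, and the family $\{A_i\cap(B_j-x)\}_{i,j}$ is locally finite. Since the closure of the interior of a locally finite union contains the closure of the interior of each member, a locally finite union of regular closed sets is again regular closed; thus $A\cap(B-x)=\bigcup_{i,j}A_i\cap(B_j-x)\in\sR$, i.e.\ $x\notin C$. Therefore $C\subseteq N$, and by completeness of Lebesgue measure $C$ is measurable with $V_d(C)=0$. The step I expect to be the main obstacle is the reduction to full-dimensional pieces via Baire category, because it is precisely what lets me avoid the otherwise awkward ``absorption'' of lower-dimensional intersection pieces into the closure of the interior; the secondary delicate point is verifying that $\interior M_{ij}$ equals $\interior B_j\oplus\interior\check{A}_i$ rather than a strictly larger set.
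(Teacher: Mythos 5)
Your proof is correct and takes essentially the same route as the paper's: decompose $A$ and $B$ into full-dimensional convex bodies, identify the exceptional translations for each pair $(i,j)$ with the boundary $\partial(\check{A}_i + B_j)$ of a Minkowski sum (using that the sum of the interiors equals the interior of the sum for convex bodies), take the countable union over pairs, and invoke completeness of the Lebesgue measure. The only differences are presentational: you justify the reduction to full-dimensional pieces via Baire category (the paper asserts it as a without-loss-of-generality), and you establish $C\subseteq N$ contrapositively via the observation that a locally finite union of regular closed sets is regular closed, where the paper argues directly with nested countable unions --- both refinements are sound.
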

\begin{proof}
  Assume that $A:=\cup_{i=1}^\infty L_i$ and
  $B:=\cup_{j=1}^\infty K_j$, where $L_i$ and $K_j$ are  convex bodies 
  in $\R^d$ for each $i,j\in\N$. Since $A$ and $B$ are regular closed,
  we can also assume without loss of generality that $L_i$ and $K_j$
  have not-empty interior.  Define
  \begin{align*}
    C_{ij}
    & :=\{x\in\R^d:L_i\cap(K_j-x)\not=\cl\big(\interior(L_i\cap(K_j-x))\big)\} \\
    & =\{x\in\R^d:L_i\cap(K_j-x)\not=\emptyset, \interior(L_i)\cap\interior(K_j-x)=\emptyset\},
  \end{align*}
  where the second equality follows from the fact that $L_i$ and $K_j$
  are convex bodies.

  The set of possible translations of $K_j$, that intersect $L_i$, is
  the Minkowski sum $\check{K_j}+L_i$, which is a
  convex body itself. Since we consider the translations by taking the
  opposite of a point in $\R^d$, we have that
  \begin{align*}
    \check{C}_{ij}
    & =(\check{K_j}+L_i)\setminus(\interior(\check{K_j})+\interior(L_i)) \\
    & = (\check{K_j}+L_i)\setminus\interior(\check{K_j}+L_i) \\
    & = \partial (\check{K_j}+L_i).
  \end{align*}
  In general,
  $\interior(\check{K_j})+\interior(L_i)\subseteq\interior(\check{K_j}+L_i)$,
  but, in this case, we have equality because the two sets are convex
  bodies. Therefore, $V_d(C_{ij})=0$, since $C_{ij}$ is the boundary
  of a convex body.

  We now show that $C\subseteq\cup_{i,j=1}^\infty C_{ij}$. If this is
  the case, then $C$ is measurable since the $\sigma$-algebra is
  complete and any subset of a measurable set of null measure is
  measurable, and
  \begin{displaymath}
    V_d(C)\leq V_d(\cup_{i,j=1}^\infty C_{ij})\leq \sum_{i,j=1}^\infty V_d(C_{ij})=0.
  \end{displaymath}
  We have
  \begin{align*}
    C
    & =\{x\in\R^d: \cup_{i=1}^\infty L_i\cap(\cup_{j=1}^\infty K_j-x)\not=\cl\big(\interior(\cup_{i=1}^\infty L_i\cap\cup_{j=1}^\infty (K_j-x))\big)\}\\
    & = \{x\in\R^d: \cup_{i,j=1}^\infty (L_i\cap( K_j-x))\supsetneq\cl\big(\interior(\cup_{i=1}^\infty L_i\cap\cup_{j=1}^\infty (K_j-x))\big)\}.
  \end{align*}
  
  We recall that, given two countable families $(A_i)_{i\geq1}$ and
  $(B_i)_{i\geq1}$ of subsets of $\R^d$, such that $A_i\supseteq B_i$
  for each $i\in\N$ and
  $\cup_{i=1}^\infty A_i\supsetneq \cup_{i=1}^\infty B_i$, then there
  exist at least one $i\in\N$ such that $A_i\supsetneq B_i$. If
  $x\in C$, then
  \begin{align*}
    \cup_{i,j=1}^\infty (L_i\cap( K_j-x))
    & \supsetneq\cl\big(\interior(\cup_{i=1}^\infty L_i)\cap\interior(\cup_{j=1}^\infty (K_j-x))\big)\\
    & \supseteq\cup_{i,j=1}^\infty \cl\big(\interior(L_i\cap(K_j-x))\big),
  \end{align*}
  so there are $i,j\in\N$ such that $x\in C_{ij}$. Then
  $C\subseteq\cup_{i,j=1}^\infty C_{ij}$.
\end{proof}

\begin{lemma}\label{lemma:Xnregclo}
  Let $\mu$ be an absolutely continuous measure on a regular closed
  set $K$, which is an element of $\sU$. Then $X_n$ is a regular closed random set for each $n\in\N$.
\end{lemma}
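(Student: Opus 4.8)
The plan is to argue by induction on $n$, combining Lemma~\ref{lemma:intersection} with the absolute continuity of $\mu$ to rule out the exceptional translations that could destroy regular closedness.

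For the base case $n=1$, the set $X_1 = K - \xi_1$ is a translate of the regular closed set $K$, hence regular closed for every value of $\xi_1$. For the inductive step I would assume that $X_{n-1} = \bigcap_{i=1}^{n-1}(K - \xi_i)$ is almost surely regular closed and write $X_n = X_{n-1} \cap (K - \xi_n)$. Since $\sU$ is closed under intersection and each $K - \xi_i$ belongs to $\sU$, the set $X_{n-1}$ always lies in $\sU$; on the almost sure event on which it is regular closed, $X_{n-1}$ is therefore a regular closed element of $\sU$. Conditioning on $(\xi_1,\dots,\xi_{n-1})$, I would then apply Lemma~\ref{lemma:intersection} with $A = X_{n-1}$ and $B = K$, which shows that the exceptional set $C := \{x \in \R^d : X_{n-1} \cap (K - x) \notin \sR\}$ is measurable and satisfies $V_d(C) = 0$. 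Because $\xi_n$ is independent of $(\xi_1,\dots,\xi_{n-1})$ and distributed according to the absolutely continuous measure $\mu$, the conditional probability that $\xi_n \in C$ equals $\mu(C)$, which vanishes since $\mu$ is absolutely continuous with respect to Lebesgue measure and $V_d(C) = 0$. As $X_n \in \sR$ exactly when $\xi_n \notin C$, integrating over the almost sure event on which $X_{n-1}$ is regular closed yields $\Prob{X_n \in \sR} = 1$.

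The step I expect to require the most care is the measurability bookkeeping that makes the conditioning rigorous: I must verify that the random exceptional set $C$ depends measurably on $(\xi_1,\dots,\xi_{n-1})$, so that the map $\omega \mapsto \mu(C)$ is measurable and the conditional-probability computation, together with Fubini's theorem, is legitimate. Here the facts that $X_{n-1}$ is a genuine measurable random closed set --- being a finite intersection of the measurable maps $\xi_i \mapsto K - \xi_i$ --- and that Lemma~\ref{lemma:intersection} delivers a measurable $C$ are exactly what is needed. Everything else reduces to the two already-established inputs: the null-measure estimate of Lemma~\ref{lemma:intersection} and the absolute continuity of $\mu$.
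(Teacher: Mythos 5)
Your proof is correct and follows essentially the same route as the paper's: induction on $n$, conditioning on $\Xi_{n-1}$, and applying Lemma~\ref{lemma:intersection} with $A = X_{n-1}$ and $B = K$ (noting that $X_{n-1}\in\sU$ always and is a.s.\ regular closed) together with the absolute continuity of $\mu$ to rule out the null set of bad translations. Your extra attention to the measurable dependence of the exceptional set $C$ on $(\xi_1,\dots,\xi_{n-1})$ addresses a point the paper passes over silently, but it does not change the argument.
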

\begin{proof}
  The proof relies on the induction. The step $n=1$ is clear.
  Assume that $X_{n-1}$ is a.s.\ regular closed. Define
  \begin{displaymath}
    A_n:=\{(x_1,\dots,x_n)\in(\R^d)^n: \cap_{i=1}^n(K-x_i)\in\sR\},
  \end{displaymath}
  for $n\in\N$. Then $\Prob{(\xi_1,\dots,\xi_{n-1})\in A_{n-1}}=1$. We
  recall the notation $\Xi_n:=\{\xi_1,\dots,\xi_n\}$. Since $K$ and
  $X_{n-1}$ are almost surely regular closed elements of $\sU$,
  Lemma~\ref{lemma:intersection} yields that
  \begin{align*}
    & \Prob{X_n=\cl(\interior(X_n))}
      =\E\Big(\Prob{X_n=\cl(\interior(X_n))|\Xi_{n-1}}\Big)  \\
    & = \E\Big(\one_{\Xi_{n-1}\in A_{n-1}}
      \Prob{X_n=\cl(\interior(X_n))|\Xi_{n-1}}\Big)\\
    & = \E\Big(\one_{\Xi_{n-1}\in A_{n-1}}
      \Prob{\{\xi: X_{n-1}\cap (K-\xi)=\cl(\interior(X_{n-1}\cap(K-\xi)))\}|\Xi_{n-1}}\Big) \\
    & = 1. \qedhere
  \end{align*}
\end{proof}

\begin{proof}[Proof of Theorem~\ref{thm:sets-convergence}]
  Assume that $K$ is a regular closed element of $\sU$. Let $L$ be a compact set in $\R^d$. Then
  \begin{align*}
    \Prob{L\subseteq n^{\gamma}X_n}
    & = \Prob{n^{-\gamma}L\subseteq (K-\xi_i) \text{ for all }i=1,\dots,n}\\
    & = \big(1-\Prob{n^{-\gamma}L\not\subseteq(K-\xi)}\big)^n\\
    & = \big(1-\Prob{\xi+n^{-\gamma}L\not\subseteq K}\big)^n\\
    & = \big(1-\Prob{\xi\not\in K\ominus n^{-\gamma}L}\big)^n.
  \end{align*}
  
  Since $K$ is a regular closed element of $\sU$, it is gentle, see
  Remark~\ref{rem:gentle}, and $L$ is compact, Proposition~\ref{prop1}
  yields that
  \begin{align*}
    \lim_{n\to\infty} n\Prob{\xi\not\in K\ominus n^{-\gamma}L}
    & = \lim_{n\to\infty} n\mu(K\setminus K\ominus n^{-\gamma}L)\\
    & =\int_{N(K)}g(a)\frac{(h(L,u)^+)^{\alpha+1}}{\alpha+1} 
      C_{d-1}(K,d(a,u))<\infty.
  \end{align*}
  Hence,
  \begin{displaymath}
    \lim_{n\to\infty}\Prob{L\subseteq n^{\gamma}X_n}
    =\exp\left(-\int_{N(K)}g(a)
      \frac{(h(L,u)^+)^{\alpha+1}}{\alpha+1} C_{d-1}(K,d(a,u))\right).
  \end{displaymath}

  Let $Z$ be the zero cell of the tessellation generated by the point
  process $\sP_K$. The random convex set $Z$ satisfies
  \begin{align*}
    \Prob{L\subseteq Z}
    & = \Prob{h(L,u)^+\leq t \text{ for all }(t,u)\in\sP_K}\\
    & = \exp(-\nu(\{(t,u)\in (0,\infty)\times \Sphere: h(L,u)^+>t\}))\\
    & =
      \exp\left(-\int_{N(K)}g(a)\frac{(h(L,u)^+)^{\alpha+1}}{\alpha+1} 
      C_{d-1}(K,d(a,u))\right).
  \end{align*}
  
  It follows that,
  \begin{displaymath}
    \lim_{n\to\infty}\Prob{L\subseteq n^{\gamma}X_n}=\Prob{L\subseteq Z}.
  \end{displaymath}
  
  The zero cell $Z$ of the tessellation is a regular closed random
  set. Indeed, $Z$ is a convex closed subset of $\R^d$. Moreover, all
  half-spaces from \eqref{eq:Z} contain the origin in their interior
  almost surely, so that the interior of $Z$ is not empty with
  probability one.

  The convergence of $n^\gamma\cl(X_n^c)$ in distribution to $\cl(Z^c)$ follows by Lemma~\ref{lemma:conv-reg-clo}, taking into
  account that $n^\gamma X_n$ is a regular closed random set by
  Lemma~\ref{lemma:Xnregclo} for each $n\in \N$ and $Z$ is a regular
  closed random set.
  
  Let $K$ be a non-empty regular closed convex set. By Lemma 7.4 in \cite{ARXIV} and the continuity theorem, the sequence of random sets $n^{\gamma} X_n$ converges in distribution $Z$ as $n\to\infty$.
\end{proof}

It depends on the support of $\hat{\nu}$ in (\ref{support}), whether the zero cell $Z$ is unbounded or bounded almost surely. The support of $\hat{\nu}$ is contained in a closed hemisphere of $\Sphere$ if and only if $Z$ is unbounded almost surely. Indeed, assume that the support of $\hat{\nu}$ is contained in a closed hemisphere of $\Sphere$, then the dual cone of the cone generated by the support of $\hat{\nu}$ is contained in $Z$ a.s. by construction. Assume that $Z$ is unbounded a.s., it follows from the construction of $Z$ that there is at least a fixed $u\in\Sphere$ such that no hyperplane generated by $\sP_K$ intersects the cone generated by $u$. Then the support of $\hat{\mu}$ must be contained in the dual cone of the cone generated by $u$, which is a closed hemisphere of $\Sphere$.

Assume that $K$ is a convex body whose interior is non-empty, the sequence of random convex bodies $n^\gamma X_n$ generated by $K$ can converge in distribution to a random set which is unbounded almost surely.

\begin{example}
  Let $K$ be the unit ball centred in the origin and let $\mu$ be a
  probability measure whose limit of the density function $g$ has
  support contained in a closed hemisphere so that the support of
  $\hat{\nu}$ is also contained in a closed hemisphere. Then the
  sequence of random sets $n^\gamma X_n$ converges in distribution to
  a random set which is unbounded almost surely.
\end{example}

It is straightforward to deduce the convergence of all power moments
of the volume restricted to a compact set.

\begin{proposition}
  Let $M$ be a compact set and let $K$ be a gentle set. Let $n^\gamma X_n$ and $Z$ be defined as in $(\ref{def:X_n})$ and $(\ref{eq:Z})$ respectively. Then, for every $m\in\N$,
  \begin{displaymath}
     \E V_d(n^\gamma X_n\cap M)^m \to \E V_d(Z\cap M)^m \quad \text{ as }n\to\infty.
  \end{displaymath}
\end{proposition}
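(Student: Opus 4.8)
The plan is to reduce the $m$-th power moment of the restricted volume to an integral of containment probabilities over $m$-point configurations, and then to feed in the pointwise convergence of those probabilities that was already obtained inside the proof of Theorem~\ref{thm:sets-convergence}. For any random closed set $X$ and the fixed compact set $L$, write $x=(x_1,\dots,x_m)$ and observe that, pathwise,
\[
V_d(X\cap L)^m=\Big(\int_L\one_{x_1\in X}\,dx_1\Big)\cdots\Big(\int_L\one_{x_m\in X}\,dx_m\Big)=\int_{L^m}\one_{\{x_1,\dots,x_m\}\subseteq X}\,dx .
\]
Taking expectations and applying Fubini's theorem to the jointly measurable map $(\omega,x)\mapsto\one_{\{x_1,\dots,x_m\}\subseteq X(\omega)}$ (standard for random closed sets, cf.\ \cite{TRS}) gives the Robbins-type identity
\[
\E V_d(X\cap L)^m=\int_{L^m}\Prob{\{x_1,\dots,x_m\}\subseteq X}\,dx ,
\]
which I would apply both to $X=n^\gamma X_n$ and to $X=Z$.

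For a fixed configuration $M:=\{x_1,\dots,x_m\}$, a finite and hence compact set with support function $h(M,u)=\max_{1\le k\le m}\langle x_k,u\rangle$, the computation in the proof of Theorem~\ref{thm:sets-convergence} applies verbatim with $L$ replaced by $M$. Indeed,
\[
\Prob{M\subseteq n^\gamma X_n}=\big(1-\mu(K\setminus K\ominus n^{-\gamma}M)\big)^n ,
\]
and Proposition~\ref{prop1} (with $\eps=1/n$) yields $n\,\mu(K\setminus K\ominus n^{-\gamma}M)\to\int_{N(K)}g(a)\frac{(h(M,u)^+)^{\alpha+1}}{\alpha+1}\,C_{d-1}(K,d(a,u))$, so that
\[
\Prob{M\subseteq n^\gamma X_n}\longrightarrow\exp\Big(-\!\int_{N(K)}g(a)\tfrac{(h(M,u)^+)^{\alpha+1}}{\alpha+1}\,C_{d-1}(K,d(a,u))\Big)=\Prob{M\subseteq Z}
\]
as $n\to\infty$. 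The essential point is that this convergence holds for \emph{every} configuration $(x_1,\dots,x_m)\in(\R^d)^m$: it is the exact closed-form probability computed in the theorem, not merely a by-product of weak convergence, so no continuity-set hypothesis on $M$ is required.

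It then remains only to pass the limit through the integral over $L^m$. Each integrand $\Prob{\{x_1,\dots,x_m\}\subseteq n^\gamma X_n}$ is bounded by the constant $1$, which is integrable on the bounded domain $L^m$ since $V_d(L^m)=V_d(L)^m<\infty$; hence dominated convergence gives
\[
\int_{L^m}\Prob{\{x_1,\dots,x_m\}\subseteq n^\gamma X_n}\,dx\longrightarrow\int_{L^m}\Prob{\{x_1,\dots,x_m\}\subseteq Z}\,dx ,
\]
which, by the Robbins identity applied on both sides, is precisely $\E V_d(n^\gamma X_n\cap L)^m\to\E V_d(Z\cap L)^m$. The argument is mostly organizational: the two genuinely technical ingredients are the joint measurability underlying Fubini and the identification of the pointwise limit as the containment probability for $Z$, both already supplied by the earlier results. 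The step most in need of care is confirming that the finite-configuration probabilities converge pointwise \emph{everywhere} rather than only on continuity sets, but this is immediate from the explicit computation in Theorem~\ref{thm:sets-convergence}; once it is in hand, the uniform bound by $1$ on the bounded set $L^m$ makes the interchange of limit and integral routine.
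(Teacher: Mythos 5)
Your proposal is correct and follows essentially the same route as the paper: the Robbins-type identity $\E V_d(X\cap L)^m=\int_{L^m}\Prob{x_1,\dots,x_m\in X}\,dx$ via Fubini, pointwise convergence of the containment probabilities for finite configurations taken from the explicit computation in Theorem~\ref{thm:sets-convergence}, and dominated convergence with the constant bound $1$ on the bounded set $L^m$. Your only addition is to spell out why the convergence holds for \emph{every} configuration (the closed-form limit from the theorem's proof, rather than weak convergence with its continuity-set caveat), a point the paper leaves implicit in its citation of the theorem.
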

\begin{proof}
  Following the same steps of proof of Theorem \ref{thm:sets-convergence} and by Proposition \ref{prop1}, we notice that, for $x_1,\dots,x_m\in\R^d$,
  \begin{multline*}
      \Prob{x_1,\dots,x_m\in n^\gamma X_n}=\Prob{\{x_1,\dots,x_m\}\subseteq n^\gamma X_n}\to\\ \Prob{\{x_1,\dots,x_m\}\subseteq Z}=\Prob{x_1,\dots,x_m\in Z},
  \end{multline*}
  as $n\to\infty$, even if $K$ is a gentle set.
  
  Then, by the dominated convergence theorem,
  \begin{align*}
    \E V_d(n^\gamma X_n\cap M)^m
    & =\int_{M}\dots\int_{M} \Prob{x_1,\dots,x_m\in n^\gamma X_n}\,dx_1\dots dx_m\\
    & \to \int_M\dots\int_{M} \Prob{x_1,\dots,x_m\in Z}\,dx_1\dots dx_m\quad \text{ as }n\to\infty\\
    & = \E V_d(Z\cap M)^m,
  \end{align*}
  since $M$ is bounded.
\end{proof}

In general, it does not hold that if $n^\gamma X_n$ and $Z$ are bounded almost surely for every $n\in\N$, then the sequence $\E V_d(n^\gamma X_n)$ converges to $\E V_d(Z)$.

If $K$ is convex, then $X_n$ is also convex. In this case, it is possible to consider its intrinsic volumes $V_j$, for $j=0,\dots,d$, which are defined by the Steiner formula, see Theorem 3.10 in \cite{SIG}.

\begin{corollary}
  Let $K$ be a convex body with a non-empty interior. Let $\mu$ be a probability measure which satisfies assumptions (M1) and (M2). Let $n^\gamma X_n$ and $Z$ be defined as in $(\ref{def:X_n})$ and $(\ref{eq:Z})$ respectively. Furthermore, assume that the support of $\hat{\nu}$, as defined in $(\ref{support})$, is not contained in a hemisphere of $\Sphere$. Then, for $j=0,\dots,d$,
    \begin{displaymath}
      V_j(n^\gamma X_n)=n^{\gamma j} V_j(X_n)\dto V_j(Z)\quad \text{as }n\to\infty.
    \end{displaymath}
\end{corollary}
\begin{proof}
    The intrinsic volumes $V_j$ are continuous with respect to the convergence in the Fell topology restricted to the family of convex bodies, see Remark 3.22 in \cite{SIG}. The random closed set $X_n$ is almost surely a convex body with non-empty interior since $K$ is a convex body with non-empty interior. Since the support of $\hat{\mu}$ is not contained in a hemisphere of $\Sphere$, $Z$ is almost surely a convex body with non-empty interior. Then the convergence in distribution of $n^\gamma X_n$ to $Z$ is assured from the second part of the statement of Theorem \ref{thm:sets-convergence}. By the continuity theorem, for $j=0,\dots,d$,
    \begin{displaymath}
      V_j(n^\gamma X_n)=n^{\gamma j} V_j(X_n)\dto V_j(Z)\quad \text{as }n\to\infty.
    \end{displaymath}
\end{proof}
\begin{example}
  Let $K$ be the union of the two disjoint balls $B_1(0)$ and
  $B_1(x)$, with $|x|>2$, and let $\mu$ be the uniform distribution on
  $B_1(0)$. The set of sample
  points $\Xi_n=\{\xi_1,\dots,\xi_n\}$ is a subset of $B_1(0)$ almost
  surely. The random set $n X_n$ is the disjoint union of the convex
  random body
  \begin{displaymath}
    n\Tilde{X}_n:=n\big(\cap_{i=1}^n(B_1(0)-\xi_i)\big),
  \end{displaymath}
  and its translate by $nx$. 

  The set $n\Tilde{X}_n$ converges in distribution to the zero cell $Z$ and the set $n\Tilde{X}_n + nx$ converges in distribution to the empty set as $n\to\infty$.
  
  Then
  \begin{displaymath}
    V_d(n X_n)=2V_d\big(n\Tilde{X}_n\big)\dto
    2V_d(Z) \quad  \text{as }n\to\infty.
  \end{displaymath}
  But $n X_n$ also converges in distribution to $Z$. Hence,
  $V_d(n X_n)$ does not converge in distribution to $V_d(Z)$
  as $n\to\infty$.  In particular, from Proposition 5.4 in \cite{MR4363583},
  \begin{displaymath}
    \E V_d(n X_n)=2\E V_d\big(n\Tilde{X}_n\big)\to 2\E
    V_d(Z)\quad  \text{as }n\to\infty,
  \end{displaymath}
  so that $\E V_d(n X_n)$ does not converge to $\E
  V_d(Z)$ as $n\to\infty$.
\end{example}

\section*{Acknowledgements}
\label{sec:acknowledgements}

The author is grateful to Prof. Ilya Molchanov for all his insightful suggestions and for his patience in correcting the various drafts of this work. 

This work was supported by the Swiss Enlargement Contribution in the Croatian--Swiss Research Programme framework. (Project number IZHRZ0\_180549).

The author declares to not have any conflicts of interest. 

Data sharing is not applicable to this article as no data-sets were generated or analyzed
during the current study.

\bibliography{Intersections_of_randomly_translated_sets}

\end{document}